 \theoremstyle{plain}
\newtheorem{thm}{Theorem}[section]
  \theoremstyle{definition}
  \newtheorem{defn}[thm]{Definition}
  \theoremstyle{remark}
  \newtheorem{rem}[thm]{Remark}
\newenvironment{keywords}{ \noindent\footnotesize\textbf{Keywords and phrases:}}{}
\newenvironment{class}{\noindent\footnotesize\textbf{Mathematics subject classification 2000:}}{}
\renewcommand*{\i}{\mathrm{i}}
\DeclareMathAccent{\Circ}{\mathalpha}{operators}{"17}
\newcommand{\interior}[1]{\Circ{#1}}
\renewcommand{\Im}{\operatorname{\mathfrak{Im}}}
\renewcommand{\Re}{\operatorname{\mathfrak{Re}}}
\renewcommand*{\epsilon}{\varepsilon}
\begin{document}
\institut{Institut f\"ur Analysis}

\preprintnumber{MATH-AN-04-2012}

\preprinttitle{Evolutionary Problems Involving Sturm-Liouville Operators.}

\author{Rainer Picard \& Bruce Watson}

\makepreprinttitlepage

\setcounter{section}{-1}

%\date{}

\title{Evolutionary Problems Involving Sturm-Liouville Operators.}

\author{Rainer Picard\\
Institut für Analysis, Fachrichtung Mathematik\\
 Technische Universität Dresden\\
 Germany\\
 rainer.picard@tu-dresden.de\\
\&\\
Bruce Watson%
\thanks{funded in part by NRF grant IFR2011032400120%
} \\
School of Mathematics\\
 University of the Witwatersrand, Johannesburg\\
 South Africa\\
 bruce.watson@wits.ac.za }
\maketitle
\begin{abstract}
The purpose of this paper is to further exemplify an approach to evolutionary
problems originally developed in \cite{Pi2009-1}, \cite{2009-2}
for a special case and extended to more general evolutionary problems,
see \cite{PAMM:PAMM201110333}, compare the survey article \cite{PIC_2010:1889}.
The ideas presented in there are utilized for $\left(1+1\right)$-dimensional
evolutionary problem, which in a particular case results in a hyperbolic
partial differential equation with a Sturm-Liouville type spatial
operator constrained by a impedance type boundary condition. 
\end{abstract}
\begin{keywords}
evolution equations, Sturm-Liouville operator, partial differential equations, causality, impedance type boundary condition, memory, delay   \end{keywords}

\begin{class}
34B24 Sturm-Liouville theory
35F10 Initial value problems for linear first-order PDE, linear evolution equations,
35A22 Transform methods (e.g. integral transforms),
47G20 Integro-differential operators,
34L40 Particular operators (Dirac, one-dimensional Schrödinger, etc.)
35K90 Abstract parabolic evolution equations,
35L90 Abstract hyperbolic evolution equations.
\end{class}

\section{Introduction}

A canonical form of many linear evolutionary%
\footnote{We prefer the term \emph{evolutionary} equations, since the term \emph{evolution
equations} is usually reserved for a special case of the class of
evolutionary equations considered here.%
} equations of mathematical physics is given by a dynamic system of
equations 
\begin{eqnarray*}
\partial_{0}V+AU & = & F
\end{eqnarray*}
completed by a so-called material law
\[
V=\mathcal{M}U.
\]
Here $\partial_{0}$ denotes derivation with respect to the time variable,
$A$ is a usually unbounded operator containing spatial derivatives
and $\mathcal{M}$ is a continuous linear operator. Here we would
like to inspect more closely a very specific situation, where the
space dimension is 1. Although this is a rather particular case it
has the advantage that an impedance type boundary condition, which
we wish to consider, can be considered in a more {}``tangible''
way without incurring regularity assumptions on coefficients and data.
In the higher dimensional case for the acoustic equations, which can
also be discussed with no further regularity requirements, the constraints
on the impedance type boundary condition are much less explicit, compare
\cite{PAMM:PAMM201110333}, \cite{PIC_2010:1889}. Moreover, we hope
to gain a different access to a class of problems, which are closely
linked to Sturm-Liouville operators, in fact yielding a generalization
of such operators. That we are discussing the direct time-dependent
problem rather than an associated spectral problem will actually be
advantageous, since it provides a simpler access to the discussion
of well-posedness. 

More specifically we want to consider 
\[
A=\left(\begin{array}{ccc}
0 & 0 & \partial\\
0 & 0 & 0\\
\partial & 0 & 0
\end{array}\right)
\]
as a differential operator on the unit interval $]-1/2,1/2[$ with
an impedance type boundary condition of the form 
\[
\partial_{0}\alpha\left(\pm1/2\mp0\right)s\left(\:\cdot\:,\pm1/2\mp0\right)-v\left(\:\cdot\:,\pm1/2\mp0\right)=0
\]
holding on the real time-line $\mathbb{R}$ as a constraint characterizing
$\left(\begin{array}{c}
s\\
w\\
v
\end{array}\right)$ in the domain $D\left(A\right).$ Here $\partial$ denotes the spatial
derivative and $\alpha$ is a coefficient operator specified more
precisely later. We shall focus here on the time-translation invariant,
i.e. autonomous, case. This means that time-translation and consequently
time-differentiation commutes with $\alpha$, $\mathcal{M}$ and $A.$

Our discussion is embedded into an abstract setting, which we will
develop in Section \ref{sec:Space-Time-Evolution-Equations} first.
In Section \ref{sec:Application:-Acoustic} we will then discuss our
problem of interest as an application of the solution theory in the
abstract setting.

\section{The Abstract Solution Framework\label{sec:Space-Time-Evolution-Equations}}

\subsection{Sobolev chains associated with the time-derivative}

A particular instance of the construction of Sobolev chains is the
one based on the time-derivative $\partial_{0}$. We recall, e.g.
from \cite{2009-2,PIC_2010:1889}, that differentiation considered
in the complex Hilbert space $H_{\nu,0}(\mathbb{R}):=\{f\in L_{\textnormal{loc}}^{2}(\mathbb{R})|(x\mapsto\exp(-\nu x)f(x))\in L^{2}(\mathbb{R})\}$,
$\nu\in\mathbb{R}\setminus\left\{ 0\right\} $, with inner product
\[
(f,g)\mapsto\langle f,g\rangle_{\nu,0}:=\int_{\mathbb{R}}f(x)^{*}g(x)\:\exp(-2\nu x)\: dx
\]
can indeed be established as a normal operator, which we denote by
$\partial_{0,\nu}$, with 
\[
\Re\partial_{0,\nu}=\nu.
\]
For $\Im\partial_{0,\nu}$ we have as a spectral representation the
Fourier-Laplace transform $\mathcal{L}_{\nu}:H_{\nu,0}(\mathbb{R})\to L^{2}\left(\mathbb{R}\right)$
given by the unitary extension of 
\begin{align*}
\interior C_{\infty}\left(\mathbb{R}\right)\subseteq H_{\nu,0}(\mathbb{R}) & \to L^{2}(\mathbb{R})\\
\phi & \mapsto\left(x\mapsto\frac{1}{\sqrt{2\pi}}\int_{\mathbb{R}}\exp\left(-\mathrm{i}xy\right)\;\exp\left(-\nu y\right)\phi(y)\; dy\right).
\end{align*}
 In other words, we have the unitary equivalence
\[
\Im\partial_{0,\nu}=\mathcal{L}_{\nu}^{-1}m\:\mathcal{L}_{\nu},
\]
where $m$ denotes the selfadjoint multiplication-by-argument operator
in $L^{2}\left(\mathbb{R}\right)$. Since $0$ is in the resolvent
set of $\partial_{0,\nu}$ we have that $\partial_{0,\nu}^{-1}$ is
an element of the Banach space $L\left(H_{\nu,0}(\mathbb{R}),H_{\nu,0}(\mathbb{R})\right)$
of continuous (left-total) linear mappings in $H_{\nu,0}(\mathbb{R})$.
Denoting generally the operator norm of the Banach space $L\left(X,Y\right)$
by $\left\Vert \:\cdot\;\right\Vert _{L\left(X,Y\right)}$, we get
for $\partial_{0}^{-1}$
\[
\left\Vert \partial_{0,\nu}^{-1}\right\Vert _{L\left(H_{\nu,0}(\mathbb{R}),H_{\nu,0}(\mathbb{R})\right)}=\frac{1}{\left|\nu\right|}.
\]
Not too surprisingly, we find for $\nu>0$ 
\[
\left(\partial_{0,\nu}^{-1}\varphi\right)\left(x\right)=\int_{-\infty}^{x}\varphi\left(t\right)\: dt
\]
and for $\nu<0$
\[
\left(\partial_{0,\nu}^{-1}\varphi\right)\left(x\right)=-\int_{x}^{\infty}\varphi\left(t\right)\: dt
\]
for all $\varphi\in H_{\nu,0}\left(\mathbb{R}\right)$ and $x\in\mathbb{R}$.
Since we are interested in the forward causal situation, we assume
$\nu>0$ throughout. Moreover, in the following we shall mostly write
$\partial_{0}$ for $\partial_{0,\nu}$ if the choice of $\nu$ is
clear from the context. 

Thus, we obtain a chain $\left(H_{\nu,k}\left(\mathbb{R}\right)\right)_{k\in\mathbb{Z}}$
of Hilbert spaces, where$H_{\nu,k}\left(\mathbb{R}\right)$ is the
completion of the inner product space $D\left(\partial_{0}^{k}\right)$
with norm $\left|\:\cdot\:\right|_{\nu,k}$ given by
\[
\phi\mapsto\left|\partial_{0}^{k}\phi\right|_{\nu,0}.
\]
Similarly, for $\i m+\nu$ as a normal operator in $L^{2}\left(\mathbb{R}\right)$
we construct the chain of polynomially weighted $L^{2}\left(\mathbb{R}\right)$-spaces
\[
\left(L_{k}^{2}\left(\mathbb{R}\right)\right)_{k\in\mathbb{Z}}
\]
with
\[
L_{k}^{2}\left(\mathbb{R}\right)\::=\left\{ f\in L_{\mathrm{loc}}^{2}\left(\mathbb{R}\right)|\,\left(\i m+\nu\right)^{k}f\in L^{2}\left(\mathbb{R}\right)\right\} =H_{k}\left(\i m+\nu\right)
\]
for $k\in\mathbb{Z}$.

Since the unitarily equivalent operators $\partial_{0,\nu}$ and $\i m+\nu$
(via the Fourier-Laplace transform) can canonically be lifted to the
$X$-valued case, $X$ an arbitrary complex Hilbert space, we are
lead to a corresponding chain $\left(H_{\nu,k}\left(\mathbb{R},X\right)\right)_{k\in\mathbb{Z}}$
and $\left(L_{k}^{2}\left(\mathbb{R},X\right)\right)_{k\in\mathbb{Z}}$
of $X$-valued generalized functions. The Fourier-Laplace transform
can also be lifted to the $X$-valued case yielding
\begin{eqnarray*}
H_{\nu,k}\left(\mathbb{R},X\right) & \to & L_{k}^{2}\left(\mathbb{R},X\right)\\
f & \mapsto & \mathcal{L}_{\nu}f
\end{eqnarray*}
as a unitary mapping for $k\in\mathbb{N}$ and by continuous extension,
keeping the notation $\mathcal{L}_{\nu}$ for the extension, also
for $k\in\mathbb{Z}$. Since $\mathcal{L}_{\nu}$ has been constructed
from a spectral representation of $\Im\partial_{0,\nu}$, we can utilize
the corresponding operator function calculus for functions of $\Im\partial_{0,\nu}$.
Noting that $\partial_{0}=\i\Im\partial_{0,\nu}+\nu$ is a function
of $\Im\partial_{0,\nu}$ we can define operator-valued functions
of $\partial_{0}$. 
\begin{defn}
Let $r>\frac{1}{2\nu}>0$ and $M:B_{\mathbb{C}}(r,r)\to L(H,H)$ be
bounded and analytic, $H$ a Hilbert space. Then define 
\[
M\left(\partial_{0}^{-1}\right):=\mathcal{L}_{\nu}^{*}\: M\left(\frac{1}{\mathrm{i}m+\nu}\right)\:\mathcal{L}_{\nu},
\]
 where 
\[
M\left(\frac{1}{\mathrm{i}m+\nu}\right)\phi(t):=M\left(\frac{1}{\mathrm{i}t+\nu}\right)\phi(t)\quad(t\in\mathbb{R})
\]
for $\phi\in\interior C_{\infty}\left(\mathbb{R},H\right)$. \end{defn}
\begin{rem}
\label{Rem: rho-Independent} The definition of $M(\partial_{0}^{-1})$
is largely independent of the choice of $\nu$ in the sense that the
operators for two different parameters $\nu_{1},\nu_{2}$ coincide
on the intersection of the respective domains. 
\end{rem}
Simple examples are polynomials in $\partial_{0}^{-1}$ with operator
coefficients. A more exotic example of an analytic and bounded function
of $\partial_{0}^{-1}$ is the delay operator, which itself is a special
case of the time translation: 

\textbf{Examples}: Let $r>0$, $\nu>\frac{1}{2r}$, $h\in\mathbb{R}$
and $u\in H_{\nu,0}(\mathbb{R},X)$. We define 
\[
\tau_{h}u:=u(\:\cdot\:+h).
\]
 The operator $\tau_{h}\in L(H_{\nu,0}(\mathbb{R},X),H_{\nu,0}(\mathbb{R},X))$
is called a \emph{time-translation operator}. If $h<0$ the operator
$\tau_{h}$ is also called a \emph{delay operator}. In the latter
case the function 
\[
B_{\mathbb{C}}(r,r)\ni z\mapsto M(z):=\exp(z^{-1}h)
\]
 is analytic and uniformly bounded for every $r\in\mathbb{R}_{>0}$
(considered as an $L\left(X,X\right)$-valued function). An easy computation
shows for $u\in H_{\nu,0}\left(\mathbb{R},X\right)$ that 
\[
u(\:\cdot\:+h)=\mathcal{L}_{\nu}^{*}\exp((\mathrm{i}m+\nu)h)\mathcal{L}_{\nu}u=M(\partial_{0}^{-1})u=\exp(\left(\partial_{0}^{-1}\right)^{-1}h)\: u.
\]
This shows that
\[
\tau_{h}=\exp\left(h/\partial_{0}^{-1}\right)=\exp\left(h\partial_{0}\right).
\]

Another class of interesting bounded analytic functions of $\partial_{0}^{-1}$
are mappings produced by a temporal convolution with a suitable operator-valued
integral kernel.

\subsection{Abstract Solution Theory}

We shall discuss equations of the form
\begin{equation}
\left(\partial_{0}M\left(\partial_{0}^{-1}\right)+A\right)U=\mathcal{J}.\label{eq:evo-abstract}
\end{equation}
where we shall assume that $A$ and $A^{*}$ are commuting with $\partial_{0}$
and non-negative in the Hilbert space $H_{\nu,0}\left(\mathbb{R},H\right)$,
$H$ a given Hilbert space, in the sense that
\[
\Re\left\langle U|AU\right\rangle _{\nu,0}\geq0,\:\Re\left\langle V|A^{*}V\right\rangle _{\nu,0}\geq0
\]
for all $U\in D\left(A\right)$, $V\in D\left(A^{*}\right),$ and
$M$ is a material law in the sense of \cite{Pi2009-1,PDE_DeGruyter}.
More specifically we assume that $M$ is of the form 
\[
M\left(z\right)=M_{0}+zM_{1}+z^{2}M^{\left(2\right)}\left(z\right)
\]
where $M^{\left(2\right)}$ is an analytic and bounded $L\left(H,H\right)$-valued
function in a ball $B_{\mathbb{C}}\left(r,r\right)$ for some $r\in\mathbb{R}_{>0}$
and $M_{0}$ is a continuous, selfadjoint and non-negative operator
in $H$. The operator $M_{1}\in L\left(H,H\right)$ is such that
\begin{equation}
\nu M_{0}+\Re M_{1}\geq c_{0}>0\label{eq:posdefmat}
\end{equation}
for all sufficiently large $\nu\in\mathbb{R}_{>0}.$ The operator
$M\left(\partial_{0}^{-1}\right)$ is then to be understood in the
sense of the operator-valued function calculus associated with the
selfadjoint operator $\Im\left(\partial_{0}\right)=\frac{1}{2\i}\left(\partial_{0}-\partial_{0}^{*}\right)$. 

The appropriate setting turns out to be the Sobolev chain 
\[
\left(H_{\nu,k}\left(\mathbb{R},\: H\right)\right)_{k,s\in\mathbb{Z}}.
\]
From \cite{Pi2009-1,PDE_DeGruyter} we paraphrase the following solution
result.
\begin{thm}
\label{SolutionTheory}For $\mathcal{J}\in H_{\nu,k}\left(\mathbb{R},\: H\right)$
the problem (\ref{eq:evo-abstract}) has a unique solution $U\in H_{\nu,k}\left(\mathbb{R},\: H\right)$.
Moreover, 
\[
F\mapsto\left(\partial_{0}M\left(\partial_{0}^{-1}\right)+A\right)^{-1}F
\]
is a linear mapping in $L\left(H_{\nu,k}\left(\mathbb{R},\: H\right),H_{\nu,k}\left(\mathbb{R},\: H\right)\right),$
$k\in\mathbb{Z}.$ These mappings are causal in the sense that if
$F\in H_{\nu,k}\left(\mathbb{R},\: H\right)$ vanishes on the time
interval $]-\infty,\, a]$, then so does \textup{$\left(\partial_{0}M\left(\partial_{0}^{-1}\right)+A\right)^{-1}F$,
$a\in\mathbb{R}$, $k\in\mathbb{Z}$. }\end{thm}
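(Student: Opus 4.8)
The plan is to establish bounded invertibility of $\partial_{0}M\left(\partial_{0}^{-1}\right)+A$ on the base space $H_{\nu,0}\left(\mathbb{R},H\right)$ by a strict accretivity (coercivity) estimate, to transport the resulting solution operator onto the whole Sobolev chain using its commutation with $\partial_{0}$, and finally to deduce causality from uniform boundedness in $\nu$ together with the half-plane analyticity of the associated symbol. I would first work at the level $k=0$ and reduce everything to a numerical estimate on $\Re\left\langle U|\left(\partial_{0}M\left(\partial_{0}^{-1}\right)+A\right)U\right\rangle _{\nu,0}$.

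For the coercivity estimate I would use the decomposition
\[
\partial_{0}M\left(\partial_{0}^{-1}\right)=\partial_{0}M_{0}+M_{1}+\partial_{0}^{-1}M^{(2)}\left(\partial_{0}^{-1}\right),
\]
noting that all three terms commute with $\partial_{0}$. Since $\Re\partial_{0}=\nu$ and $M_{0}=M_{0}^{*}\geq0$, the first two terms combine to $\left\langle U|\left(\nu M_{0}+\Re M_{1}\right)U\right\rangle _{\nu,0}\geq c_{0}\left|U\right|_{\nu,0}^{2}$ by the positive-definiteness hypothesis (\ref{eq:posdefmat}). The remaining term is controlled in operator norm by $\frac{1}{\nu}\sup\left\Vert M^{(2)}\right\Vert $ via $\left\Vert \partial_{0}^{-1}\right\Vert =1/\nu$, hence is absorbed for large $\nu$; and $\Re\left\langle U|AU\right\rangle _{\nu,0}\geq0$ by assumption contributes non-negatively. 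This yields
\[
\Re\left\langle U|\left(\partial_{0}M\left(\partial_{0}^{-1}\right)+A\right)U\right\rangle _{\nu,0}\geq c\left|U\right|_{\nu,0}^{2}
\]
for some $c>0$ and all sufficiently large $\nu$, so that $\left|\left(\partial_{0}M\left(\partial_{0}^{-1}\right)+A\right)U\right|_{\nu,0}\geq c\left|U\right|_{\nu,0}$ by Cauchy--Schwarz, giving injectivity, closed range, and the bound $1/c$ on the inverse along its range.

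To upgrade closed range to surjectivity I would establish the analogous estimate for the adjoint: writing $\partial_{0}^{*}=2\nu-\partial_{0}$ and invoking $\Re\left\langle V|A^{*}V\right\rangle _{\nu,0}\geq0$ together with the same positivity of $\nu M_{0}+\Re M_{1}$, one obtains $\Re\left\langle V|\left(\partial_{0}M\left(\partial_{0}^{-1}\right)+A\right)^{*}V\right\rangle _{\nu,0}\geq c\left|V\right|_{\nu,0}^{2}$. Strict accretivity of both the operator and its adjoint forces dense range, whence bounded invertibility on $H_{\nu,0}\left(\mathbb{R},H\right)$. Because $M\left(\partial_{0}^{-1}\right)$ and $A$ commute with $\partial_{0}$, so does the inverse; and since the $H_{\nu,k}$-norm is $\left|\partial_{0}^{k}\,\cdot\,\right|_{\nu,0}$, the estimate transfers verbatim to each $k\in\mathbb{Z}$, which gives the asserted mapping property on the full chain.

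For causality I would exploit that the bound $1/c$ is uniform in $\nu$ for $\nu\geq\nu_{0}$ and that the solution operator is $\nu$-independent (Remark \ref{Rem: rho-Independent}) and commutes with the translations $\tau_{h}=\exp\left(h\partial_{0}\right)$. Conjugating by $\mathcal{L}_{\nu}$ realizes it as multiplication by the $L\left(H,H\right)$-valued symbol $z\mapsto\left(zM\left(z^{-1}\right)+A\right)^{-1}$ evaluated at $z=\mathrm{i}t+\nu$; the pointwise version of the coercivity estimate shows this symbol is analytic and uniformly bounded on the half-plane $\Re z>\nu_{0}$, and the Paley--Wiener theorem then identifies the time-domain operator as convolution with a kernel supported in $[0,\infty)$, which is exactly the stated forward causality. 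I expect this last step to be the main obstacle, since it rests on the uniform-in-$\nu$ bound and the half-plane analyticity of the symbol rather than on the single base-level estimate, and it is the point where the analyticity of the material law $M$ is genuinely used.
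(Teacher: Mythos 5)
First, a point of calibration: the paper never proves Theorem \ref{SolutionTheory} --- it is explicitly \emph{paraphrased} from \cite{Pi2009-1,PDE_DeGruyter} --- so your attempt must be measured against the argument in those sources and against what the present paper sets up in Section \ref{sec:Application:-Acoustic} in order to invoke the theorem. Your well-posedness part is essentially that argument: the accretivity estimate from $\Re\partial_{0}=\nu$, $M_{0}=M_{0}^{*}\geq0$ and hypothesis (\ref{eq:posdefmat}), absorption of the term $\partial_{0}^{-1}M^{\left(2\right)}\left(\partial_{0}^{-1}\right)$ of norm $\leq C/\nu$ for large $\nu$, non-negativity of $A$; the analogous estimate for the adjoint giving dense range; and the lift to all $k\in\mathbb{Z}$ by commutation with $\partial_{0}$. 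Two technical points are glossed over but are repairable: the sum $\partial_{0}M\left(\partial_{0}^{-1}\right)+A$ on $D\left(\partial_{0}\right)\cap D\left(A\right)$ need not be closed, so injectivity, closed range and the bound must be transferred to its closure (this is what the remark following the theorem in the paper addresses); and surjectivity needs the coercivity estimate on all of $D\left(\left(\partial_{0}M\left(\partial_{0}^{-1}\right)+A\right)^{*}\right)$, whereas your computation delivers it only on $D\left(\partial_{0}^{*}\right)\cap D\left(A^{*}\right)$ --- identifying the two (up to closures) requires a mollification argument, e.g. with $\left(1+\epsilon\partial_{0}\right)^{-1}$.

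The causality step, however, contains a genuine gap, and it is not the one you flagged. Conjugation with $\mathcal{L}_{\nu}$ does \emph{not} realize the solution operator as multiplication by $z\mapsto\left(zM\left(z^{-1}\right)+A\right)^{-1}$ with a fixed $A$: commutation of $A$ with $\partial_{0}$ only makes $A$ decomposable into a family of fibre operators over the frequency variable, whose action \emph{and domain} may depend on the frequency. This is exactly the situation in the application this paper is written for, where $D\left(A\right)$ is defined by the impedance condition $a\left(\partial_{0}^{-1}\right)s-\partial_{0}^{-1}v\in H_{\nu,0}\left(\mathbb{R},H\left(\interior{\partial},I\right)\right)$; the analyticity of the resulting symbol family --- the very input Paley--Wiener needs --- is then not a consequence of coercivity but an unproven claim. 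Worse, with the hypotheses as literally stated (non-negativity of $A$, $A^{*}$ plus commutation with $\partial_{0}$) causality is simply false: take $H=\mathbb{C}$, $M\left(\partial_{0}^{-1}\right)=1$, $A=\left(\partial_{0}^{*}\right)^{-1}$. Then $A$ is bounded, commutes with $\partial_{0}$, and $A$, $A^{*}$ are non-negative since $\Re\frac{1}{\mp\mathrm{i}t+\nu}=\frac{\nu}{t^{2}+\nu^{2}}>0$, yet the solution operator is multiplication in the frequency domain by $t\mapsto\frac{\nu-\mathrm{i}t}{t^{2}+\nu^{2}+1}$, which has poles at $t=\pm\mathrm{i}\sqrt{\nu^{2}+1}$, one in each half-plane, so by your own Paley--Wiener criterion it is neither causal nor anti-causal (its kernel is a two-sided exponential). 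What the cited proofs actually use --- and the reason the present paper labours to establish (\ref{eq:causalposdef}) and (\ref{eq:causalposdef+}) rather than merely (\ref{eq:posdefa}) --- is the cut-off coercivity estimate
\[
\Re\left\langle \chi_{_{]-\infty,a]}}\left(m_{0}\right)U|\left(\partial_{0}M\left(\partial_{0}^{-1}\right)+A\right)U\right\rangle _{\nu,0}\geq c\left|\chi_{_{]-\infty,a]}}\left(m_{0}\right)U\right|_{\nu,0}^{2},
\]
whose material-law part holds because $M_{0},M_{1}$ commute with the time cut-off and $\partial_{0}^{-1}M^{\left(2\right)}\left(\partial_{0}^{-1}\right)$ is causal, and whose $A$-part is precisely hypothesis (\ref{eq:causalposdef+}). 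Given this estimate, causality is immediate: if $F$ vanishes on $]-\infty,a]$, then $\Re\left\langle \chi_{_{]-\infty,a]}}\left(m_{0}\right)U|F\right\rangle _{\nu,0}=0$, hence $\chi_{_{]-\infty,a]}}\left(m_{0}\right)U=0$. That cut-off estimate is the missing idea in your proposal; the Paley--Wiener route you sketch only works when $A$ is a frequency-independent (purely spatial) operator, in which case the cut-off estimate is automatic anyway.
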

\begin{rem}
Note that if $U\in H_{\nu,k}\left(\mathbb{R},\: H\right)$ and $\mathcal{J}\in H_{\nu,k}\left(\mathbb{R},\: H\right)$
equation (\ref{eq:evo-abstract}) actually makes sense in $H_{\nu,k-1}\left(\mathbb{R},\: H\right).$
Initially the solution theory is for the closure of $\left(\partial_{0}M\left(\partial_{0}^{-1}\right)+A\right)$
as a closed operator in $H_{\nu,k-1}\left(\mathbb{R},\: H\right),$
but it is
\[
\left(\partial_{0}M\left(\partial_{0}^{-1}\right)+A\right)U=\overline{\left(\partial_{0}M\left(\partial_{0}^{-1}\right)+A\right)}\, U
\]
with equality holding in $H_{\nu,k-1}\left(\mathbb{R},\: H\right).$
Indeed, for $\phi\in H_{\nu,k}\left(\mathbb{R},H\right)\cap D$$\left(A^{*}\right)$
we have
\begin{eqnarray*}
\left\langle \phi|\overline{\left(\partial_{0}M\left(\partial_{0}^{-1}\right)+A\right)}\, U\right\rangle _{\nu,k-1,0} & = & \left\langle \left(\partial_{0}^{*}M\left(\partial_{0}^{-1}\right)^{*}+A^{*}\right)\phi|U\right\rangle _{\nu,k-1,0}\\
 & = & \left\langle \phi|M\left(\partial_{0}^{-1}\right)\partial_{0}U\right\rangle _{\nu,k-1,0}+\left\langle A^{*}\phi|U\right\rangle _{\nu,k-1,0}
\end{eqnarray*}
and we read off that $U\in D\left(A\right)$ if $A$ is considered
in $H_{\nu,k-1}\left(\mathbb{R},H\right)$ (rather than $H_{\nu,k}\left(\mathbb{R},H\right)$)
giving 
\begin{eqnarray*}
AU & = & \overline{\left(\partial_{0}M\left(\partial_{0}^{-1}\right)+A\right)}\, U-M\left(\partial_{0}^{-1}\right)\,\partial_{0}U,\\
 & = & \overline{\left(\partial_{0}M\left(\partial_{0}^{-1}\right)+A\right)}\, U-\partial_{0}M\left(\partial_{0}^{-1}\right)\, U.
\end{eqnarray*}
The rigorous argument is somewhat more involved, see \cite{PIC_2010:1889,2009-2}.
This observation, however, motivates dropping the closure bar throughout.
\end{rem}

\section[\selectlanguage{american}%
Sturm-Liouville Evolutionary Problems\foreignlanguage{english}{}\selectlanguage{british}
]{\label{sec:Application:-Acoustic}An Application: An Evolutionary
Problem Involving a Sturm-Liouville Type Operator with an Impedance
Type Boundary Condition}

We exemplify the outlined theory by a $\left(1+1\right)$-dimensional
example. 

Consider%
\footnote{This $3\times3$-system has been chosen, rather than an alternative
$2\times2$-system formulation, since it shows conservativity in a
more obvious way, see Footnote \ref{fn:A-more-common}.%
} 
\[
A=\left(\begin{array}{ccc}
0 & 0 & \partial\\
0 & 0 & 0\\
\partial & 0 & 0
\end{array}\right)
\]
with an impedance type boundary condition implemented in the domain
of $A$ given by{\small 
\[
\left\{ \left(\begin{array}{c}
s\\
w\\
v
\end{array}\right)\in H_{\nu,0}\left(\mathbb{R},H\left(\partial,I\right)\oplus L^{2}\left(I\right)\oplus H\left(\partial,I\right)\right)\Big|\: a\left(\partial_{0}^{-1}\right)s-\partial_{0}^{-1}v\in H_{\nu,0}\left(\mathbb{R},H\left(\interior{\partial},I\right)\right)\right\} ,
\]
}where $H\left(\interior{\partial},I\right)$ denotes the completion
of the space of smooth function with compact support in $I=]-1/2,1/2[$
with respect to the graph norm of the derivative operator $\partial$.
The space $H\left(\partial,I\right)$ is the domain of the adjoint
of $\interior{\partial}$ also equipped with the corresponding graph
norm. We focus here on the finite interval case. It should be noted,
however, that the same reasoning would likewise work for the half
infinite interval case, say $I=\mathbb{R}_{>0}$. Indeed this case
would be in a sense simpler since only one boundary point would need
to be considered. We assume
\[
x\mapsto a\left(x,\:\cdot\:\right)
\]
to be uniformly continuous bounded-analytic-function-valued mapping.
As a matter of simplification we shall assume that $\left(z\mapsto a\left(x,z\right)\right)_{x\in I}$
is a uniformly bounded family of bounded functions, which are analytic
in a ball $B_{\mathbb{C}}\left(0,2r\right)$ centred at $0$ with
radius $r\in\mathbb{R}_{>0}$. Then surely $\left(z\mapsto a\left(x,z\right)\right)_{x\in I}$
is also a bounded family of analytic functions in $B_{\mathbb{C}}\left(r,r\right)$
and for $\nu>\frac{1}{2r}$ we have a continuous linear and causal
mapping: 
\begin{eqnarray*}
a\left(\partial_{0}^{-1}\right):H_{\nu,0}\left(\mathbb{R},L^{2}\left(I\right)\right) & \to & H_{\nu,0}\left(\mathbb{R},L^{2}\left(I\right)\right)\\
\varphi & \mapsto & \left(t\mapsto\left(x\mapsto a\left(x,\partial_{0}^{-1}\right)\varphi\left(t,x\right)\right)\right).
\end{eqnarray*}
Assuming further that the distributional derivative $a^{\prime}\coloneq\left(x\mapsto a\left(x,\:\cdot\:\right)\right)^{\prime}$
is such that 
\[
\left(z\mapsto a^{\prime}\left(x,z\right)\right)_{x\in I\setminus N}
\]
is also a uniformly bounded family of bounded functions, which are
analytic in $B_{\mathbb{C}}\left(0,2r\right)$, $N$ a Lebesgue null
set, we get a bounded linear and causal mapping 
\[
a^{\prime}\left(\partial_{0}^{-1}\right):H_{\nu,0}\left(\mathbb{R},L^{2}\left(I\right)\right)\to H_{\nu,0}\left(\mathbb{R},L^{2}\left(I\right)\right)
\]
and the product rule 
\[
\partial\left(a\left(\partial_{0}^{-1}\right)\, s\right)=a^{\prime}\left(\partial_{0}^{-1}\right)\: s+a\left(\partial_{0}^{-1}\right)\partial s
\]
holds for $s\in D\left(\partial\right)$. As another simplification
we assume that $a$ is real in the sense that
\[
a\left(x,z\right)^{*}=a\left(x,z^{*}\right)
\]
for $x\in I$ and $z\in B_{\mathbb{C}}\left(r,r\right)$.

Such an operator $A$ combined with a suitable material law yields
an evolutionary problem of the form
\begin{equation}
\left(\partial_{0}M\left(\partial_{0}^{-1}\right)+A\right)U=\mathcal{J}.\label{eq:Sturm-Louis}
\end{equation}
 Indeed, we consider a material law operators of the form%
\footnote{\label{fn:A-more-common}This implies a polynomial type material law
operator of the form 
\[
M\left(\partial_{0}^{-1}\right)=\left(\begin{array}{ccc}
\kappa_{0} & 0 & 0\\
0 & \kappa_{1} & 0\\
0 & 0 & \epsilon
\end{array}\right)+\partial_{0}^{-1}\left(\begin{array}{ccc}
0 & 0 & 0\\
0 & 0 & -\mu_{0}^{*}\\
0 & \mu_{0} & \eta
\end{array}\right)+\partial_{0}^{-2}\left(\begin{array}{ccc}
0 & 0 & 0\\
0 & 0 & 0\\
0 & 0 & \mu_{1}
\end{array}\right).
\]
We see that for $\kappa_{0},\,\kappa_{1},\,\epsilon$ strictly positive
continuous linear operators, $\mu_{1}=0$ and $\eta=0$ and $A$ skew-selfadjoint,
e.g. if $a\left(\partial_{0}^{-1}\right)=0$, we have a conservative
system since then $M^{\left(2\right)}\left(\partial_{0}^{-1}\right)=0$
and 
\[
M_{1}=\left(\begin{array}{ccc}
0 & 0 & 0\\
0 & 0 & -\mu_{0}^{*}\\
0 & \mu_{0} & \eta
\end{array}\right)
\]
is skew-selfadjoint making $A+M_{1}$ and so also $\sqrt{M_{0}^{-1}}\left(A+M_{1}\right)\sqrt{M_{0}^{-1}}$
skew-selfadjoint. Consequently, $\sqrt{M_{0}^{-1}}\left(A+M_{1}\right)\sqrt{M_{0}^{-1}}$
generates a unitary 1-parameter group and {}``energy'' conservation
holds in the sense that for the solution $U$ of a pure initial value
problem we have for $t\in\mathbb{R}_{>0}$ 
\[
\left|\sqrt{M_{0}}U\left(t\right)\right|_{H}=\left|\sqrt{M_{0}}U\left(0+\right)\right|_{H}
\]
or if one prefers to underscore the {}``energy'' metaphor
\[
E\left(t\right)\coloneq\frac{1}{2}\left|\sqrt{M_{0}}U\left(t\right)\right|_{H}^{2}=\frac{1}{2}\left|\sqrt{M_{0}}U\left(0+\right)\right|_{H}^{2}=E\left(0+\right).
\]
} 
\begin{align*}
M\left(\partial_{0}^{-1}\right) & =\left(\begin{array}{ccc}
\kappa_{0} & 0 & 0\\
0 & \kappa_{1} & -\mu_{0}^{*}\partial_{0}^{-1}\\
0 & \mu_{0}\partial_{0}^{-1} & \epsilon+\eta\,\partial_{0}^{-1}+\mu_{1}\partial_{0}^{-2}
\end{array}\right),
\end{align*}
where $\epsilon:L^{2}\left(I\right)\to L^{2}\left(I\right)$, $\kappa_{0}:L^{2}\left(I\right)\to L^{2}\left(I\right)$,
$\kappa_{1}:L^{2}\left(I\right)\to L^{2}\left(I\right)$ are suitable
continuous, selfadjoint, non-negative mappings and and $\mu_{0}:L^{2}\left(I\right)\to L^{2}\left(I\right)$,
$\mu_{1}:L^{2}\left(I\right)\to L^{2}\left(I\right)$, $\eta:L^{2}\left(I\right)\to L^{2}\left(I\right)$
are continuous and linear. We assume of course that the coefficient
operators are such that (\ref{eq:posdefmat}) is satisfied, i.e. 
\[
\nu\epsilon+\Re\eta\geq c_{0}>0
\]
for some $c_{0}\in\mathbb{R}$ and all sufficiently large $\nu\in\mathbb{R}_{>0}.$
Such material laws are suggested by models of linear acoustics, see
e.g. \cite{Leis:Buch:2}, or by the so-called Maxwell-Cattaneo-Vernotte
law \cite{Cattaneo.26203,Pi2009-1} describing heat propagation. In
the 1-dimensional case, focused on here, this special material law
operator can be reduced to a wave or heat equation type partial differential
operator with a Sturm-Liouville type operator as spatial part. Indeed,
assuming additionally that $\kappa_{0}$ and $\kappa_{1}$ are strictly
positive, two elementary row operations%
\footnote{A more common point of view for this operation would be to think of
new unknowns being introduced. Indeed, if the system unknowns are
$\left(\begin{array}{c}
s\\
w\\
v
\end{array}\right)$ then letting
\[
y\coloneq\partial_{0}^{-1}v
\]
 we would get from a line by line inspection of the system
\[
\left(\begin{array}{ccc}
\kappa_{0}\partial_{0} & 0 & \partial\\
0 & \kappa_{1}\partial_{0} & -\mu_{0}^{*}\\
\partial & \mu_{0} & \epsilon\partial_{0}+\eta+\mu_{1}\partial_{0}^{-1}
\end{array}\right)\left(\begin{array}{c}
s\\
w\\
v
\end{array}\right)=\left(\begin{array}{c}
0\\
0\\
f
\end{array}\right)
\]
that 
\begin{eqnarray*}
s & = & -\kappa_{0}^{-1}\partial y\\
w & = & \kappa_{1}^{-1}\mu_{0}^{*}y
\end{eqnarray*}
and so that
\[
\epsilon\partial_{0}^{2}y+\eta\partial_{0}y+\mu_{1}y+\mu_{0}\kappa_{1}^{-1}\mu_{0}^{*}y-\partial\kappa_{0}^{-1}\partial y=f.
\]
} applied to
\[
\left(\begin{array}{ccc}
\kappa_{0}\partial_{0} & 0 & \partial\\
0 & \kappa_{1}\partial_{0} & -\mu_{0}^{*}\\
\partial & \mu_{0} & \epsilon\partial_{0}+\eta+\mu_{1}\partial_{0}^{-1}
\end{array}\right)
\]
yield formally
\[
\left(\begin{array}{ccc}
\kappa_{0}\partial_{0} & 0 & \partial\\
0 & \kappa_{1}\partial_{0} & -\mu_{0}^{*}\\
0 & 0 & \partial_{0}^{-1}\left(\epsilon\partial_{0}^{2}+\eta\partial_{0}+\left(\mu_{0}\kappa_{1}^{-1}\mu_{0}+\mu_{1}\right)-\partial\kappa_{0}^{-1}\partial\right)
\end{array}\right).
\]
Clearly, applying the Fourier-Laplace transform to $\left(\epsilon\partial_{0}^{2}+\eta\partial_{0}+\left(\mu_{0}\kappa_{1}^{-1}\mu_{0}^{*}+\mu_{1}\right)-\partial\kappa_{0}^{-1}\partial\right)$
we obtain point-wise, writing $\sqrt{\lambda}$ instead of $\left(\i m+\nu\right)$,
\[
\left(\epsilon\lambda+\eta\sqrt{\lambda}+\left(\mu_{0}\kappa_{1}^{-1}\mu_{0}+\mu_{1}\right)-\partial\kappa_{0}^{-1}\partial\right),
\]
which for vanishing {}``damping'' $\eta$ is indeed a Sturm-Liouville
operator%
\footnote{If $\epsilon=0$ and $\eta$ has a strictly positive definite symmetric
part, i.e. the selfadjoint $\Re\eta$ is strictly positive, we arrive
at the parabolic type operator 
\[
\eta\partial_{0}+q-\partial p\partial
\]
and writing $\lambda$ instead of $\left(\i m+\nu\right)$ we get
again a Sturm-Liouville type operator
\[
\eta\lambda+q-\partial p\partial,
\]
where now $\eta$ plays the role of $r$.%
}: 
\[
r\lambda+q-\partial p\partial
\]

with
\begin{eqnarray*}
r & \coloneq & \epsilon,\\
q & \coloneq & \mu_{0}\kappa_{1}^{-1}\mu_{0}+\mu_{1}\,,\\
p & \coloneq & \kappa_{0}^{-1}.
\end{eqnarray*}
For our purposes we may allow for general material laws in the problem
(\ref{eq:Sturm-Louis}).

Denoting the inner product and norm of $H_{\nu,0}\left(\mathbb{R},L^{2}\left(I\right)\oplus L^{2}\left(I\right)\oplus L^{2}\left(I\right)\right)$
by $\left\langle \;\cdot\:|\;\cdot\:\right\rangle _{\nu,0,0}$ and
$\left|\;\cdot\:\right|_{\nu,0,0},$ respectively, we calculate
\begin{equation}
\begin{array}{l}
\Re\left\langle \chi_{_{]-\infty,0]}}\left(m_{0}\right)\left(\begin{array}{c}
s\\
w\\
v
\end{array}\right)\left|A\left(\begin{array}{c}
s\\
w\\
v
\end{array}\right)\right.\right\rangle _{\nu,0,0}=\\
=\Re\left(\left\langle \chi_{_{]-\infty,0]}}\left(m_{0}\right)s|\partial v\right\rangle _{\nu,0,0}+\left\langle \partial s|\chi_{_{]-\infty,0]}}\left(m_{0}\right)v\right\rangle _{\nu,0,0}\right)\\
=\Re\left\langle \chi_{_{]-\infty,0]}}\left(m_{0}\right)s|\interior{\partial}\left(v-\partial_{0}a\left(\partial_{0}^{-1}\right)s\right)\right\rangle _{\nu,0,0}+\\
+\Re\left\langle \chi_{_{]-\infty,0]}}\left(m_{0}\right)s|\partial\partial_{0}a\left(\partial_{0}^{-1}\right)s\right\rangle _{\nu,0,0}+\Re\left\langle \partial s|\chi_{_{]-\infty,0]}}\left(m_{0}\right)v\right\rangle _{\nu,0,0}\\
=-\Re\left\langle \partial s|\chi_{_{]-\infty,0]}}\left(m_{0}\right)\left(v-\partial_{0}a\left(\partial_{0}^{-1}\right)s\right)\right\rangle _{\nu,0,0}+\Re\left\langle \chi_{_{]-\infty,0]}}\left(m_{0}\right)s|\partial\partial_{0}a\left(\partial_{0}^{-1}\right)s\right\rangle _{\nu,0,0}+\\
+\Re\left\langle \partial s|\chi_{_{]-\infty,0]}}\left(m_{0}\right)v\right\rangle _{\nu,0,0}\\
=\Re\left(\left\langle \partial\chi_{_{]-\infty,0]}}\left(m_{0}\right)s|\partial_{0}a\left(\partial_{0}^{-1}\right)s\right\rangle _{\nu,0,0}+\left\langle \chi_{_{]-\infty,0]}}\left(m_{0}\right)s|\partial\partial_{0}a\left(\partial_{0}^{-1}\right)s\right\rangle _{\nu,0,0}\right)\\
=\Re\left\langle \chi_{_{]-\infty,0]}}\left(m_{0}\right)s\left(\,\cdot\:,+1/2\right)|\partial_{0}a\left(+1/2,\partial_{0}^{-1}\right)s\left(\,\cdot\:,+1/2\right)\right\rangle _{\nu,0}+\\
-\Re\left\langle \chi_{_{]-\infty,0]}}\left(m_{0}\right)s\left(\,\cdot\:,-1/2\right)|\partial_{0}a\left(-1/2,\partial_{0}^{-1}\right)s\left(\,\cdot\:,-1/2\right)\right\rangle _{\nu,0}.
\end{array}\label{eq:causalposdef-}
\end{equation}
 For this to be non-negative we assume that $a$ is such that
\begin{equation}
\Re\left\langle \chi_{_{]-\infty,0]}}\left(m_{0}\right)\varphi|\pm\partial_{0}a\left(\pm1/2,\partial_{0}^{-1}\right)\varphi\right\rangle _{\nu,0}\geq0\label{eq:positivity}
\end{equation}
for every $\varphi\in H_{\nu,1}\left(\mathbb{R}\right)$. Due to its
analyticity at $0$ the operators $a\left(x,\partial_{0}^{-1}\right)$
are of the form 
\[
a\left(\partial_{0}^{-1}\right)=a_{0}+a_{1}\partial_{0}^{-1}+a_{2}\partial_{0}^{-2}+\partial_{0}^{-3}a^{\left(3\right)}\left(\partial_{0}^{-1}\right),
\]
where $a^{\left(3\right)}\left(\partial_{0}^{-1}\right)$ is bounded.
With this we can analyse (\ref{eq:positivity}) further. It is
\begin{eqnarray*}
 &  & \Re\left\langle \chi_{_{]-\infty,0]}}\left(m_{0}\right)\varphi|\pm\partial_{0}a_{0}\left(\pm1/2\right)\varphi\right\rangle _{\nu,0}=\\
 &  & =\pm\int_{-\infty}^{0}\varphi\left(t\right)^{*}\left(\partial_{0}a_{0}\left(\pm1/2\right)\varphi\right)\left(t\right)\:\exp\left(-2\nu t\right)\: dt\\
 &  & =\pm\nu\int_{-\infty}^{0}a_{0}\left(\pm1/2\right)\left|\varphi\left(t\right)\right|^{2}\:\exp\left(-2\nu t\right)\: dt\pm\frac{1}{2}a_{0}\left(\pm1/2\right)\left|\varphi\left(0\right)\right|^{2}
\end{eqnarray*}
which is non-negative if we assume 
\begin{equation}
\pm a_{0}\left(\pm1/2\right)\geq0.\label{eq:aplusminus}
\end{equation}

Similarly
\begin{eqnarray*}
 &  & \Re\left\langle \chi_{_{]-\infty,0]}}\left(m_{0}\right)\varphi|\pm a_{1}\left(\pm1/2\right)\varphi\right\rangle _{\nu,0}=\\
 &  & =\pm\int_{-\infty}^{0}\varphi\left(t\right)^{*}a_{1}\left(\pm1/2\right)\varphi\left(t\right)\:\exp\left(-2\nu t\right)\: dt.
\end{eqnarray*}
Assuming that%
\footnote{If $a\left(\partial_{0}^{-1}\right)=a_{0}+a_{1}\partial_{0}^{-1}$
it is sufficient to require 
\[
\pm\nu a_{0}\left(\pm1/2\right)\pm a_{1}\left(\pm1/2\right)\geq0
\]
for all sufficiently large $\nu\in\mathbb{R}_{>0}.$%
}
\[
\pm\nu a_{0}\left(\pm1/2\right)\pm a_{1}\left(\pm1/2\right)\geq c_{0}>0
\]
for $\nu\in\mathbb{R}_{>0}$ sufficiently large we obtain
\begin{eqnarray*}
 &  & \Re\left\langle \chi_{_{]-\infty,0]}}\left(m_{0}\right)\varphi|\pm\partial_{0}a\left(\pm1/2,\partial_{0}^{-1}\right)\varphi\right\rangle _{\nu,0}\geq\\
 &  & \geq\left(\pm\nu a_{0}\left(\pm1/2\right)\pm a_{1}\left(\pm1/2\right)\right)\left|\chi_{_{]-\infty,0]}}\left(m_{0}\right)\varphi\right|_{\nu,0}^{2}+\\
 &  & +\Re\left\langle \chi_{_{]-\infty,0]}}\left(m_{0}\right)\varphi|\pm\partial_{0}^{-1}a^{\left(2\right)}\left(\pm1/2,\partial_{0}^{-1}\right)\varphi\right\rangle _{\nu,0}.
\end{eqnarray*}
Due to causality we have
\begin{eqnarray*}
 &  & \left|\Re\left\langle \chi_{_{]-\infty,0]}}\left(m_{0}\right)\varphi|\pm\partial_{0}^{-1}a^{\left(2\right)}\left(\pm1/2,\partial_{0}^{-1}\right)\varphi\right\rangle _{\nu,0}\right|=\\
 &  & =\left|\Re\left\langle \chi_{_{]-\infty,0]}}\left(m_{0}\right)\varphi|\pm a^{\left(2\right)}\left(\pm1/2,\partial_{0}^{-1}\right)\chi_{_{]-\infty,0]}}\left(m_{0}\right)\partial_{0}^{-1}\varphi\right\rangle _{\nu,0}\right|\\
 &  & \leq C_{1}\left|\chi_{_{]-\infty,0]}}\left(m_{0}\right)\varphi\right|_{\nu,0}\left|\chi_{_{]-\infty,0]}}\left(m_{0}\right)\partial_{0}^{-1}\varphi\right|_{\nu,0}\\
 &  & \leq C_{1}\left|\chi_{_{]-\infty,0]}}\left(m_{0}\right)\varphi\right|_{\nu,0}\left|\chi_{_{]-\infty,0]}}\left(m_{0}\right)\partial_{0}^{-1}\chi_{_{]-\infty,0]}}\left(m_{0}\right)\varphi\right|_{\nu,0}\\
 &  & \leq C_{1}\left|\chi_{_{]-\infty,0]}}\left(m_{0}\right)\varphi\right|_{\nu,0}\left|\partial_{0}^{-1}\chi_{_{]-\infty,0]}}\left(m_{0}\right)\varphi\right|_{\nu,0}\\
 &  & \leq C_{1}\nu^{-1}\left|\chi_{_{]-\infty,0]}}\left(m_{0}\right)\varphi\right|_{\nu,0}^{2}.
\end{eqnarray*}
Under this assumption we have
\begin{equation}
\Re\left\langle \chi_{_{]-\infty,0]}}\left(m_{0}\right)U|AU\right\rangle _{\nu,0,0}\geq0\label{eq:causalposdef}
\end{equation}
for all $U\in D\left(A\right)$ if $\nu\in\mathbb{R}_{>0}$ is sufficiently
large. Note that by the time-translation invariance this is the same
as saying
\begin{equation}
\Re\left\langle \chi_{_{]-\infty,a]}}\left(m_{0}\right)U|AU\right\rangle _{\nu,0,0}\geq0\label{eq:causalposdef+}
\end{equation}

for all $U\in D\left(A\right)$ and all $a\in\mathbb{R}$. Letting
$a\to\infty$ we obtain from this
\begin{equation}
\Re\left\langle U|AU\right\rangle _{\nu,0,0}\geq0\label{eq:posdefa}
\end{equation}
for all $U\in D\left(A\right)$. 

We need to find the adjoint of $A$. It must satisfy 

\[
-\left(\begin{array}{ccc}
0 & 0 & \interior{\partial}\\
0 & 0 & 0\\
\interior{\partial} & 0 & 0
\end{array}\right)\subseteq A^{*}\subseteq-\left(\begin{array}{ccc}
0 & 0 & \partial\\
0 & 0 & 0\\
\partial & 0 & 0
\end{array}\right)
\]
in the sense of extensions. We now show that $D\left(A^{*}\right)$
is given by{\small{} 
\[
\left\{ \left(\begin{array}{c}
s\\
w\\
v
\end{array}\right)\in H_{\nu,0}\left(\mathbb{R},H\left(\partial,I\right)\oplus L^{2}\left(I\right)\oplus H\left(\partial,I\right)\right)\:\Big|\; a\left(\left(\partial_{0}^{-1}\right)^{*}\right)s+\left(\partial_{0}^{-1}\right)^{*}v\in H_{\nu,0}(\mathbb{R},H\left(\interior{\partial},I\right)\right\} .
\]
}Indeed, for 
\[
\left(\begin{array}{c}
s\\
w\\
v
\end{array}\right)\in D\left(A\right)
\]
we have
\[
\left(\begin{array}{cc}
1 & 0\\
-a\left(\partial_{0}^{-1}\right) & \partial_{0}^{-1}
\end{array}\right)\left(\begin{array}{c}
s\\
v
\end{array}\right)\in H_{\nu,0}\left(\mathbb{R},H\left({\partial},I\right)\oplus H\left(\interior{\partial},I\right)\right).
\]
Direct computation gives
\begin{eqnarray*}
\left(\begin{array}{cc}
0 & \interior{\partial}\\
\partial & 0
\end{array}\right)\left(\begin{array}{cc}
1 & 0\\
-a\left(\partial_{0}^{-1}\right) & \partial_{0}^{-1}
\end{array}\right) & = & \left(\begin{array}{cc}
0 & \partial\\
\partial & 0
\end{array}\right)\left(\begin{array}{cc}
1 & 0\\
0 & \partial_{0}^{-1}
\end{array}\right)+\left(\begin{array}{cc}
0 & \partial\\
0 & 0
\end{array}\right)\left(\begin{array}{cc}
0 & 0\\
-a\left(\partial_{0}^{-1}\right) & 0
\end{array}\right)\\
 & = & \left(\begin{array}{cc}
\partial_{0}^{-1} & 0\\
0 & 1
\end{array}\right)\left(\begin{array}{cc}
0 & \partial\\
\partial & 0
\end{array}\right)-\left(\begin{array}{cc}
a^{\prime}\left(\partial_{0}^{-1}\right) & 0\\
0 & 0
\end{array}\right)+\\
 &  & -\left(\begin{array}{cc}
a\left(\partial_{0}^{-1}\right)\partial & 0\\
0 & 0
\end{array}\right)\\
 & = & \left(\begin{array}{cc}
\partial_{0}^{-1} & -a\left(\partial_{0}^{-1}\right)\\
0 & 1
\end{array}\right)\left(\begin{array}{cc}
0 & \partial\\
\partial & 0
\end{array}\right)-\left(\begin{array}{cc}
a^{\prime}\left(\partial_{0}^{-1}\right) & 0\\
0 & 0
\end{array}\right).
\end{eqnarray*}
Thus, we have 
\begin{eqnarray*}
\left(\begin{array}{cc}
0 & \partial\\
\partial & 0
\end{array}\right)\partial_{0}^{-1}\left(\begin{array}{c}
s\\
v
\end{array}\right) & = & \left(\begin{array}{cc}
1 & a\left(\partial_{0}^{-1}\right)\\
0 & \partial_{0}^{-1}
\end{array}\right)\left(\begin{array}{cc}
0 & \interior{\partial}\\
\partial & 0
\end{array}\right)\left(\begin{array}{cc}
1 & 0\\
-a\left(\partial_{0}^{-1}\right) & \partial_{0}^{-1}
\end{array}\right)\left(\begin{array}{c}
s\\
v
\end{array}\right)+\\
 &  & +\left(\begin{array}{cc}
1 & a\left(\partial_{0}^{-1}\right)\\
0 & \partial_{0}^{-1}
\end{array}\right)\left(\begin{array}{cc}
a^{\prime}\left(\partial_{0}^{-1}\right) & 0\\
0 & 0
\end{array}\right)\left(\begin{array}{c}
s\\
v
\end{array}\right)\\
 & = & \left(\begin{array}{cc}
1 & a\left(\partial_{0}^{-1}\right)\\
0 & \partial_{0}^{-1}
\end{array}\right)\left(\begin{array}{cc}
0 & \interior{\partial}\\
\partial & 0
\end{array}\right)\left(\begin{array}{cc}
1 & 0\\
-a\left(\partial_{0}^{-1}\right) & \partial_{0}^{-1}
\end{array}\right)\left(\begin{array}{c}
s\\
v
\end{array}\right)+\\
 &  & +\left(\begin{array}{cc}
a^{\prime}\left(\partial_{0}^{-1}\right) & 0\\
0 & 0
\end{array}\right)\left(\begin{array}{c}
s\\
v
\end{array}\right).
\end{eqnarray*}
Letting $\left(\begin{array}{cc}
1 & 0\\
-a\left(\partial_{0}^{-1}\right) & \partial_{0}^{-1}
\end{array}\right)\left(\begin{array}{c}
s\\
v
\end{array}\right)=W$ we have for $\left(\begin{array}{c}
v_{0}\\
v_{1}\\
v_{2}
\end{array}\right)\in D\left(A^{*}\right)$ and for every $W\in H_{\nu,0}\left(\mathbb{R},H\left({\partial},I\right)\oplus H\left(\interior{\partial},I\right)\right),$
\begin{eqnarray*}
0 & = & \left\langle \left(\begin{array}{cc}
0 & \partial\\
\partial & 0
\end{array}\right)\left(\begin{array}{cc}
\partial_{0}^{-1} & 0\\
a\left(\partial_{0}^{-1}\right) & 1
\end{array}\right)W|\left(\begin{array}{c}
v_{0}\\
v_{2}
\end{array}\right)\right\rangle _{\nu,0,0}+\left\langle \left(\begin{array}{cc}
\partial_{0}^{-1} & 0\\
a\left(\partial_{0}^{-1}\right) & 1
\end{array}\right)W|\left(\begin{array}{cc}
0 & \partial\\
\partial & 0
\end{array}\right)\left(\begin{array}{c}
v_{0}\\
v_{2}
\end{array}\right)\right\rangle _{\nu,0,0}\\
 & = & \left\langle \left(\begin{array}{cc}
1 & a\left(\partial_{0}^{-1}\right)\\
0 & \partial_{0}^{-1}
\end{array}\right)\left(\begin{array}{cc}
0 & \interior{\partial}\\
\partial & 0
\end{array}\right)W|\left(\begin{array}{c}
v_{0}\\
v_{2}
\end{array}\right)\right\rangle _{\nu,0,0}+\\
 &  & +\left\langle \left(\begin{array}{cc}
a^{\prime}\left(\partial_{0}^{-1}\right) & 0\\
0 & 0
\end{array}\right)W|\,\left(\begin{array}{c}
v_{0}\\
v_{2}
\end{array}\right)\right\rangle _{\nu,0,0}+\left\langle \left(\begin{array}{cc}
\partial_{0}^{-1} & 0\\
a\left(\partial_{0}^{-1}\right) & 1
\end{array}\right)W|\left(\begin{array}{cc}
0 & \partial\\
\partial & 0
\end{array}\right)\left(\begin{array}{c}
v_{0}\\
v_{2}
\end{array}\right)\right\rangle _{\nu,0,0}\\
 & = & \left\langle \left(\begin{array}{cc}
0 & \interior{\partial}\\
\partial & 0
\end{array}\right)W|\left(\begin{array}{cc}
1 & 0\\
a\left(\left(\partial_{0}^{-1}\right)^{*}\right) & \left(\partial_{0}^{-1}\right)^{*}
\end{array}\right)\left(\begin{array}{c}
v_{0}\\
v_{2}
\end{array}\right)\right\rangle _{\nu,0,0}+\\
 &  & +\left\langle W|\,\left(\begin{array}{cc}
a^{\prime}\left(\left(\partial_{0}^{-1}\right)^{*}\right) & 0\\
0 & 0
\end{array}\right)\left(\begin{array}{c}
v_{0}\\
v_{2}
\end{array}\right)\right\rangle _{\nu,0,0}+\\
 &  & +\left\langle W|\left(\begin{array}{cc}
\left(\partial_{0}^{-1}\right)^{*} & a\left(\left(\partial_{0}^{-1}\right)^{*}\right)\\
0 & 1
\end{array}\right)\left(\begin{array}{cc}
0 & \partial\\
\partial & 0
\end{array}\right)\left(\begin{array}{c}
v_{0}\\
v_{2}
\end{array}\right)\right\rangle _{\nu,0,0}.
\end{eqnarray*}
This implies that
\[
\left(\begin{array}{cc}
1 & 0\\
a\left(\left(\partial_{0}^{-1}\right)^{*}\right) & \left(\partial_{0}^{-1}\right)^{*}
\end{array}\right)\left(\begin{array}{c}
v_{0}\\
v_{2}
\end{array}\right)\in H_{\nu,0}\left(\mathbb{R},H\left({\partial},I\right)\oplus H\left(\interior{\partial},I\right)\right),
\]
which is the above characterization of $D\left(A^{*}\right)$. Moreover,
\begin{eqnarray*}
\left(\begin{array}{cc}
0 & \interior{\partial}\\
\partial & 0
\end{array}\right)\left(\begin{array}{cc}
1 & 0\\
a\left(\left(\partial_{0}^{-1}\right)^{*}\right) & \left(\partial_{0}^{-1}\right)^{*}
\end{array}\right)\left(\begin{array}{c}
v_{0}\\
v_{2}
\end{array}\right) & = & \left(\begin{array}{cc}
a^{\prime}\left(\left(\partial_{0}^{-1}\right)^{*}\right) & 0\\
0 & 0
\end{array}\right)\left(\begin{array}{c}
v_{0}\\
v_{2}
\end{array}\right)+\\
 &  & +\left(\begin{array}{cc}
\left(\partial_{0}^{-1}\right)^{*} & a\left(\left(\partial_{0}^{-1}\right)^{*}\right)\\
0 & 1
\end{array}\right)\left(\begin{array}{cc}
0 & \partial\\
\partial & 0
\end{array}\right)\left(\begin{array}{c}
v_{0}\\
v_{2}
\end{array}\right).
\end{eqnarray*}

As a consequence of the similarity between $A$ and $A^{*}$ we find
by analogous reasoning that we have not only (\ref{eq:posdefa}) but
also, indeed more straight-forwardly,
\begin{equation}
\Re\left\langle V|A^{*}V\right\rangle _{\nu,0,0}\geq0\label{eq:A*nonneg}
\end{equation}
for all $V\in D\left(A^{*}\right)$. The calculation is similar to
(\ref{eq:causalposdef-}) but without the cut-off with $\chi_{_{]-\infty,0}}\left(m_{0}\right)$.
Thus we have indeed that
\[
\partial_{0}M\left(\partial_{0}^{-1}\right)+A
\]
is continuously invertible with causal inverse $\left(\partial_{0}M\left(\partial_{0}^{-1}\right)+A\right)^{-1}:H_{\nu,k}\left(\mathbb{R},H\right)\to H_{\nu,k}\left(\mathbb{R},H\right)$
for every $k\in\mathbb{Z}$ and any sufficiently large $\nu\in\mathbb{R}_{>0}$.
We summarize our findings in the following theorem.
\begin{thm}
Under assumptions (\ref{eq:posdefmat}) and (\ref{eq:positivity})
we have that for every $\mathcal{J}\in H_{\nu,k}\left(\mathbb{R},H\right)$
the problem (\ref{eq:Sturm-Louis}) has a unique solution $U\in H_{\nu,k}\left(\mathbb{R},H\right).$
The solution operator $\left(\partial_{0}M\left(\partial_{0}^{-1}\right)+A\right)^{-1}:H_{\nu,k}\left(\mathbb{R},H\right)\to H_{\nu,k}\left(\mathbb{R},H\right)$
is continuous and causal for every $k\in\mathbb{Z}$ and any sufficiently
large $\nu\in\mathbb{R}_{>0}$.\end{thm}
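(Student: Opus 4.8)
\emph{Proof sketch.} The plan is to obtain the statement as a direct specialization of the abstract solution result Theorem \ref{SolutionTheory} to the present operator $A$ and material law $M$. Accordingly, I would organize the argument around verifying the three standing hypotheses of that theorem: that $A$ and $A^{*}$ commute with $\partial_{0}$; that both are non-negative, i.e.\ $\Re\langle U|AU\rangle_{\nu,0,0}\geq0$ for $U\in D(A)$ and $\Re\langle V|A^{*}V\rangle_{\nu,0,0}\geq0$ for $V\in D(A^{*})$; and that $M(\partial_{0}^{-1})$ is a material law of the admissible form with the coercivity (\ref{eq:posdefmat}). Once these are established, existence and uniqueness of $U\in H_{\nu,k}(\mathbb{R},H)$ for each datum $\mathcal{J}$, continuity of the solution operator on every $H_{\nu,k}(\mathbb{R},H)$, and its causality all follow verbatim from Theorem \ref{SolutionTheory}.

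First I would dispatch the easy hypotheses. Commutation with $\partial_{0}$ is a consequence of the autonomous (time-translation invariant) setting: the entries of $A$ are the purely spatial operator $\partial$ and the coefficient operator $a(\partial_{0}^{-1})$, and the impedance constraint cutting out $D(A)$ is expressed solely through $a(\partial_{0}^{-1})$ and $\partial_{0}^{-1}$; being either spatial or functions of $\partial_{0}$, all of these commute with $\partial_{0}$, and by the explicit description of $D(A^{*})$ obtained above the same holds for $A^{*}$. The material-law hypotheses are built into the construction: $M_{0}=\mathrm{diag}(\kappa_{0},\kappa_{1},\epsilon)$ is continuous, selfadjoint and non-negative, $M_{1}\in L(H,H)$ with the off-diagonal $\mu_{0},\mu_{0}^{*}$ contributing nothing to $\Re M_{1}$, the remainder is of order $\partial_{0}^{-2}$ with bounded symbol, and (\ref{eq:posdefmat}) holds by assumption, its effective content being $\nu\epsilon+\Re\eta\geq c_{0}>0$ on the third component for large $\nu$.

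The substantive step, and the one I expect to be the main obstacle, is the non-negativity of $A$, recorded as (\ref{eq:posdefa}). The approach here is to first prove the localized inequality $\Re\langle\chi_{]-\infty,0]}(m_{0})U|AU\rangle_{\nu,0,0}\geq0$: integrating by parts on $I=\,]-1/2,1/2[$ collapses this form to the two boundary terms at $\pm1/2$, and the impedance positivity hypothesis (\ref{eq:positivity}) is precisely what renders these non-negative. The delicate part is to control the higher-order tail of the expansion of $a(\partial_{0}^{-1})$ into its constant part $a_{0}$, its first-order part $a_{1}\partial_{0}^{-1}$, and a bounded remainder $\partial_{0}^{-1}a^{(2)}(\partial_{0}^{-1})$: invoking the causality of $a^{(2)}(\partial_{0}^{-1})$ (so that it commutes past the cut-off) together with $\|\partial_{0}^{-1}\|=\nu^{-1}$, the tail is bounded by $C_{1}\nu^{-1}|\chi_{]-\infty,0]}(m_{0})\varphi|_{\nu,0}^{2}$, while the boundary positivity supplies a strictly positive lower bound $(\pm\nu a_{0}(\pm1/2)\pm a_{1}(\pm1/2))\geq c_{0}>0$; hence for $\nu$ large the positive contribution dominates the remainder. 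Time-translation invariance then promotes the inequality to the analogue (\ref{eq:causalposdef+}) with $\chi_{]-\infty,a]}(m_{0})$ for every $a\in\mathbb{R}$, and the limit $a\to\infty$ yields (\ref{eq:posdefa}).

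Finally I would treat $A^{*}$. Using the transposition identity that produced the explicit form of $D(A^{*})$, the same integration-by-parts computation gives the non-negativity (\ref{eq:A*nonneg}); this is in fact simpler, as no causal cut-off is needed. With all hypotheses of Theorem \ref{SolutionTheory} in place, a single invocation of that theorem, for the prescribed $k\in\mathbb{Z}$ and all sufficiently large $\nu\in\mathbb{R}_{>0}$, completes the argument.
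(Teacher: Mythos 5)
Your proposal matches the paper's proof: the paper disposes of this theorem in one line by invoking Theorem \ref{SolutionTheory}, whose hypotheses are verified in the preceding section exactly along your lines --- the cut-off form (\ref{eq:causalposdef-}) collapsing to boundary terms made non-negative by (\ref{eq:positivity}), promotion via time-translation invariance and the limit $a\to\infty$ to (\ref{eq:posdefa}), the explicit computation of $D\left(A^{*}\right)$ yielding (\ref{eq:A*nonneg}), and the material-law condition (\ref{eq:posdefmat}). Note only that the expansion $a_{0}+a_{1}\partial_{0}^{-1}+\dots$ and the $C_{1}\nu^{-1}$ tail estimate serve in the paper merely to exhibit sufficient conditions under which (\ref{eq:positivity}) holds; once (\ref{eq:positivity}) is assumed, as in the theorem's hypotheses, that extra control is not needed.
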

\begin{proof}
Under the stated constraints the assumptions of Theorem \ref{SolutionTheory}
are satisfied and the result follows. \end{proof}

\end{document}